\newtheorem{theorem}{Theorem}
\newtheorem{lemma}{Lemma}
\newtheorem*{conjecture*}{Conjecture}
\newtheorem{corollary}{Corollary}
\newtheorem{proposition}{Proposition}
\newtheorem*{example*}{Example}
\theoremstyle{definition}
\newtheorem{remark}{Remark}
\DeclareMathOperator*{\argmax}{argmax}
\newcommand{\boldpi}{\boldsymbol{\pi}}
\renewcommand{\le}{\leqslant}
\renewcommand{\ge}{\geqslant}
\title{Bachet's game with lottery moves}
\author{Dmitry Dagaev\footnote{HSE University. Address: 101000, Russia, Moscow, Myasnitskaya street, 20. E-mail: ddagaev@gmail.com. Corresponding author.} ~and Ilya Schurov\footnote{HSE University. Address: 101000, Russia, Moscow, Myasnitskaya street, 20. E-mail: ilya@schurov.com.}}
\date{}
\begin{document}
    \maketitle
    \begin{abstract}
Bachet's game is a variant of the game of Nim. There are $n$ objects in one pile. Two players take turns to remove any positive number of objects not exceeding some fixed number $m$. The player who takes the last object loses. We consider a variant of Bachet's game in which each move is a lottery over set $\{1,2,\ldots, m\}$. The outcome of a lottery is the number of objects that player takes from the pile. We show that under some nondegenericity assumptions on the set of available lotteries the probability that the first player wins in subgame perfect Nash equilibrium converges to $1/2$ as $n$ tends to infinity.

\bigskip

\noindent \textbf{Keywords}: game theory; Bachet's game; backward induction; lotteries.  
    \end{abstract}

    \section{Introduction and main result}
    Bachet's game was formulated in~\cite{Bachet} as follows. Starting from 1, two players add one after another some integer number not exceeding 10 to the sum. The player who is the first to reach 100, wins. This game can be considered as a variant of the game of Nim \cite{Bouton} (other variants can be found, for example, in \cite{Boros2018, Boros2019, Gray, Li, Moore}). One can easily find subgame perfect Nash equilibrium (SPNE) in Bachet's game with backward induction~\cite{Bachet}. 
    
Now assume that at every move instead of choosing the exact number not exceeding some $m$, the player chooses some lottery (i.e. probability distribution)
over numbers $\{1, 2, \ldots, m\}$ from some set of available lotteries, observes realization of the lottery and then makes the corresponding move. Below we provide formal rules of the game that is considered in this paper.

\textbf{Bachet's game with lottery moves} (BGLM). The game is defined by the natural number $n$ of objects in the pile, the natural number $m$ and a set of available lotteries $K\subset S_m$, where $S_m$ is a simplex of all lotteries over numbers $\{1, 2, \ldots, m\}$. Two players take turns to choose a lottery from the set $K$. After making the choice, the player observes realization of the lottery and then takes the corresponding number of objects from the pile. The player who takes the last object loses, including the case when they have to take more objects than remains in the pile. Both players want to maximize the probability of their own victory.

Our main result is the following theorem.

    \begin{theorem}\label{thm:main}
        Fix arbitrary integer $m>1$ and some compact
        set $K\subset S_m$ with the following properties:
        \begin{equation}
            \eta := \max_{(\pi_1,\ldots ,\pi_m) \in K } \max_{i \in \{1, \ldots, m\}} \pi_i < 1;
            \label{eta}
        \end{equation}
        \begin{equation}    
            \nu := \min_{i\in \{1, \ldots, m\}} \max_{(\pi_1,\ldots ,\pi_m) \in K} \pi_i>0.
            \label{nu}        
        \end{equation}
        For any initial number of objects $n$, consider BGLM
        with parameters $n$, $m$, $K$. This game has a non-empty set of SPNE.
        Denote by $p_n$ the probability that the first player wins in arbitrary
        SPNE. 

        Then $p_n$ does not depend on the choice of
        SPNE and
        \begin{equation}\label{eq:main}
            \lim_{n\to\infty}p_n=\frac{1}{2}.
        \end{equation}
    \end{theorem}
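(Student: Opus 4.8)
The plan is to reduce the theorem to the convergence of a scalar recursion and then rule out a nonzero limiting oscillation by a compactness argument. First, since every move removes at least one object the game tree is finite, so a standard backward-induction argument for finite games of perfect information with chance moves produces an SPNE and shows that all SPNE assign the same probability of victory to the player on move from each position; compactness of $K$ ensures the relevant maxima are attained. Writing $w_k$ for this probability when $k$ objects remain, we get $w_1=0$ and, for $k\ge 2$,
\[
  w_k=\max_{(\pi_1,\dots,\pi_m)\in K}\ \sum_{i=1}^{\min(m,k-1)}\pi_i\,(1-w_{k-i}),
\]
so in particular $p_n=w_n$, which settles the independence of the SPNE. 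For $k>m$ the truncation is vacuous, and since $\sum_i\pi_i=1$, substituting $f(k):=w_k-\tfrac12$ turns this into the homogeneous recursion $f(k)=\max_{\pi\in K}\sum_{i=1}^m\pi_i\,(-f(k-i))$, valid for all $k>m$. The goal is now to prove $f(k)\to 0$.

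Second, from this recursion $|f(k)|\le\max_{1\le i\le m}|f(k-i)|$, so the windowed maxima $M_k:=\max_{k-m\le j\le k-1}|f(j)|$ are eventually non-increasing and converge to some $L\ge 0$, and moreover $\limsup_k|f(k)|=L$. Suppose, for contradiction, that $L>0$. Pick $n_j\to\infty$ with $f(n_j)$ convergent and $|f(n_j)|\to L$, and apply a diagonal extraction to the shifted sequences $t\mapsto f(n_j+t)$ to obtain $\phi\colon\mathbb Z\to[-L,L]$ with $\phi(0)=\pm L$. Since, for each fixed $t$, the maps $\pi\mapsto\sum_{i=1}^m\pi_i(-f(n_j+t-i))$ converge uniformly on the compact set $K$ to $\pi\mapsto\sum_{i=1}^m\pi_i(-\phi(t-i))$ (finitely many coefficients, each convergent), the maxima converge, so $\phi$ satisfies the same recursion $\phi(k)=\max_{\pi\in K}\sum_{i=1}^m\pi_i\,(-\phi(k-i))$ for every $k\in\mathbb Z$. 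Assume $\phi(0)=L$ (the case $\phi(0)=-L$ is analogous).

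Third, the contradiction. If $\phi(k)=L$, an optimal $\pi^\ast\in K$ has $\sum_i\pi^\ast_i(-\phi(k-i))=L$ with each $-\phi(k-i)\le L$, so $\phi(k-i)=-L$ for every $i\in\operatorname{supp}\pi^\ast$; by \eqref{eta} we have $\max_i\pi^\ast_i\le\eta<1$, so $\operatorname{supp}\pi^\ast$ has at least two elements, hence at least two indices $i$ satisfy $\phi(k-i)=-L$. If $\phi(k)=-L$, then since $\sum_i\pi_i(-\phi(k-i))\ge -L$ for every $\pi\in K$, the maximum $-L$ is a universal lower bound, so $\sum_i\pi_i(-\phi(k-i))=-L$ for every $\pi\in K$, forcing $\phi(k-i)=L$ for every $i\in\operatorname{supp}\pi$; ranging over all $\pi\in K$ and using \eqref{nu} (each $i$ lies in the support of some lottery in $K$, since $\max_{\pi\in K}\pi_i\ge\nu>0$) we get $\phi(k-1)=\dots=\phi(k-m)=L$. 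Now apply these observations to $\phi(0)=L$: there is $i_1\in\{1,\dots,m\}$ with $\phi(-i_1)=-L$, whence $\phi(-i_1-1)=\dots=\phi(-i_1-m)=L$; but then the first observation applied at $-i_1-1$ demands at least two indices $i$ with $\phi(-i_1-1-i)=-L$, while $\phi(-i_1-1-i)=L$ for all $i\in\{1,\dots,m-1\}$, leaving at most one candidate ($i=m$) — a contradiction. Hence $L=0$, so $f(k)\to 0$ and $p_n=w_n\to\tfrac12$.

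The routine parts are the backward-induction setup and the monotonicity of $M_k$; the main obstacle is the final step. Two points need care: (i) verifying that the maximization over $K$ survives the passage to the pointwise limit $\phi$ — this is where compactness of $K$ together with continuity of each coordinate functional is used — and (ii) extracting from the two hypotheses exactly the combinatorial facts that collide, namely that optimal lotteries have support of size at least $2$ (from $\eta<1$) while $\bigcup_{\pi\in K}\operatorname{supp}\pi=\{1,\dots,m\}$ (from $\nu>0$), so that the requirement "at least $2$ indices needed, at most $1$ available" forces $L=0$. One should also note that the degenerate small-$k$ regime is irrelevant to the limit, since for $k>m$ the recursion involves only $w_{k-1},\dots,w_{k-m}$.
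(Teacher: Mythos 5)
Your proof is correct, but it takes a genuinely different route from the paper's. Both arguments start identically: backward induction gives the recursion $p_k=\max_{\boldsymbol{\pi}\in K}\bigl(1-\sum_i\pi_i p_{k-i}\bigr)$, and the windowed maxima of $|p_k-\tfrac12|$ are non-increasing, hence convergent to some $L\ge 0$. From there the paper runs a \emph{quantitative} analysis: via a chain of lemmas (no long winning series, a worst-case estimate relating $\Delta_{k+1}$ to $\Delta_{k-m}$, and a ``corridor lemma'' exploiting \eqref{nu}) it produces an explicit $\delta<1$ with $\overline{\Delta}_k\le\delta\,\overline{\Delta}_{k-3m}$, so the deviation from $\tfrac12$ decays geometrically with an effective rate in terms of $\eta$ and $\nu$. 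You instead give a \emph{soft} compactness argument: assuming $L>0$, you extract a bi-infinite limit profile $\phi:\mathbb Z\to[-L,L]$ satisfying the same recursion (the passage to the limit of the maxima over $K$ is justified exactly as you say, by uniform convergence of finitely many linear functionals on a compact set), and then derive a purely combinatorial contradiction: $\phi(k)=L$ forces at least two indices $i$ with $\phi(k-i)=-L$ because \eqref{eta} makes every optimal support have size at least $2$, while $\phi(k)=-L$ forces $\phi(k-1)=\cdots=\phi(k-m)=L$ because \eqref{nu} puts every index in the support of some lottery; these two facts are incompatible within a window of length $m+1$. Your version is shorter and isolates cleanly how each of the two hypotheses enters, but it yields no rate of convergence, whereas the paper's argument gives the explicit geometric bound $\overline{\Delta}_{1+3mN}\le\tfrac12\delta^N$. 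Both are valid proofs of \eqref{eq:main}; the extra care you flag (passing the maximization over $K$ to the limit, and the handling of the truncated recursion for small $k$) is indeed all that needs checking, and it checks out.
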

    \begin{remark}
        It can be easily proved that if limit~\eqref{eq:main} exists, it has
        to be equal to~$\frac12$. Assume by contradiction that
        limit~\eqref{eq:main} exists and equals $a\neq\frac{1}{2}$. Take some
        $\varepsilon<|a-\frac{1}{2}|$. Then, for some $N$ and all $i\geqslant
        1$, it is true that $|p_{N+i}-a|<\varepsilon$. Consider two cases. If
        $a>\frac12$, then  $p_{N+i}>\frac{1}{2}$ for all $i=1,\ldots,m$ and it
        follows that $p_{N+m+1}<\frac{1}{2}$. Indeed, if any move from the
        initial position leads to a state with winning probability greater than
        $\frac12$, then the winning probability for the initial position is less
        than $\frac12$; formally it follows from~\eqref{eq:pk} below. Similarly,
        if $a<\frac12$, then $p_{N+i}<\frac{1}{2}$ for all $i=1,\ldots,m$ and it
        follows that $p_{N+m+1}>\frac{1}{2}$. This leads us to a contradiction
        with the definition of $N$. Hence, the interesting part is the existence
        of this limit. 
    \end{remark}
    \begin{remark}
        \autoref{thm:main} allows the following interpretation. Assume that the players play classical Bachet's game, but after choosing their move, they make mistakes and play other moves (including suboptimal ones) with some positive probability. Condition \eqref{eta} says that mistakes are unavoidable: there are no pure (i.e., not mixed) moves in the set of all possible moves $K$. This condition is an essential characteristic of BGLM; \eqref{eta} does not hold for classical Bachet's game ($\eta=1$ for the latter). It follows from~\autoref{thm:main} that the presence of unavoidable mistakes drastically changes the outcome of the game for large $n$. Condition~\eqref{nu} says that it is possible to take $i$ objects from the pile, $i=1,\ldots, m$, with positive probability. Condition \eqref{nu} also holds for classical Bachet's game since $\nu=1$ (one can take any number of objects with probability 1). 
    \end{remark}
  
    \begin{conjecture*}
        Though condition \eqref{nu} plays an important technical role in our proof, we believe~\autoref{thm:main} holds true even if this condition is violated.
    \end{conjecture*}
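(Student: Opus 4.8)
The plan is to work directly with the backward-induction recursion and to isolate exactly what \eqref{nu} buys, so that the argument can be pushed as far as \eqref{eta} alone permits. Writing $p_n$ for the value of the position with $n$ objects, optimality of the mover gives, for $n>m$,
\[
 p_n = 1-\min_{(\pi_1,\dots,\pi_m)\in K}\sum_{i=1}^m \pi_i\, p_{n-i},
\]
since the mover maximizes $\sum_i\pi_i(1-p_{n-i})$ and $\sum_i\pi_i=1$. Setting $e_n:=p_n-\tfrac12$ converts this to $e_n=\max_{\pi\in K}\sum_{i=1}^m \pi_i(-e_{n-i})$ with $|e_n|\le\tfrac12$. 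The two crude averaging bounds $e_n\le-\min_{j\le m}e_{n-j}$ and $e_n\ge-\max_{j\le m}e_{n-j}$, after passing to $\limsup$ and $\liminf$, yield $U+L=1$, where $U=\limsup_n p_n$, $L=\liminf_n p_n$. This step uses neither \eqref{eta} nor \eqref{nu}; it already recovers the symmetry behind the Remark and reduces the conjecture to the single equality $U=L$.

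Next I would quantify the oscillation. The quantity $V_n:=\max_{1\le j\le m}|e_{n-j}|$ is non-increasing, so $V_n\downarrow V\ge0$, and the bounds above give $\limsup_n e_n=V$ and $\liminf_n e_n=-V$; the conjecture is thus equivalent to $V=0$. Let $D:=\{i:\max_{\pi\in K}\pi_i>0\}$ be the set of offsets that can actually occur, with $c_i:=\max_{\pi\in K}\pi_i>0$ for $i\in D$; condition \eqref{eta} forces $|D|\ge2$ (a one-point $D$ would make every lottery a point mass). Since every element of $D$ is a multiple of $d:=\gcd(D)$, the recursion decouples across residue classes modulo $d$, and quotienting by $d$ reduces matters to $\gcd(D)=1$ with the same bound \eqref{eta}.

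For $V>0$ and small $\varepsilon$ call $n$ a \emph{peak} if $e_n\ge V-\varepsilon$ and a \emph{trough} if $e_n\le-V+\varepsilon$; both occur infinitely often. Testing the recursion against the lottery with weight $c_i$ on a fixed $i\in D$ and bounding the remaining mass below by $-(V+\varepsilon)(1-c_i)$ shows that a trough at $n$ forces $e_{n-i}\ge V-2\varepsilon/c_i$, i.e. \emph{every $D$-predecessor of a trough is a peak}. Dually, at a peak the maximizing lottery must place mass $\ge1-2\varepsilon/\delta$ on troughs, and since \eqref{eta} caps each coordinate by $\eta<1$ this mass cannot sit on one offset, so \emph{a peak has at least two $D$-predecessors that are troughs}. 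Under \eqref{nu} these two facts close instantly: there $D=\{1,\dots,m\}$, so a trough at $n$ makes all of $n-1,\dots,n-m$ peaks, and applying the peak property to $n-1$ would require two troughs among $n-2,\dots,n-1-m$, of which only $n-1-m$ escapes being a forced peak — a contradiction, whence $V=0$.

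The main obstacle is precisely the collapse of this last step once \eqref{nu} is dropped. With $D\subsetneq\{1,\dots,m\}$ a trough pins down only its $D$-predecessors, so the troughs feeding a peak may hide at offsets outside $D$; and the incidence counts extractable from \eqref{eta} alone (over long windows, with $t$ troughs and $p$ peaks, $t\le p\le\tfrac12|D|\,t$) are mutually consistent and force no contradiction. Moreover the reduction to $\gcd(D)=1$ does not capture the real difficulty: the sign flip $-e_{n-i}$ means that if $D$ consists of odd offsets, then $f_n:=(-1)^ne_n$ obeys a genuine averaging recursion, so $e_n$ settles into a period-two pattern $e_n\sim C(-1)^n$ and $V>0$ persists even though $\gcd(D)=1$. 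Thus the conjecture cannot be reached by the local peak/trough argument; the crux is a global control of how peaks and troughs may be arranged along $\mathbb{Z}$ given only the reachability set $D$, and I expect closing it will require either ruling out these parity-type arrangements by a finer invariant than $\gcd(D)$ or, more plausibly, adjoining to \eqref{eta} a weaker nondegeneracy hypothesis on $K$ that restores just enough reachability to drive the counting argument to a contradiction.
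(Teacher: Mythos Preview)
The statement is presented in the paper as an open \emph{conjecture}; the authors offer no proof. Your proposal is not a proof either --- and, more to the point, your own final paragraph contains the seed of a \emph{disproof} that you do not quite commit to.

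You observe that when the support $D=\{i:\max_{\pi\in K}\pi_i>0\}$ consists entirely of odd offsets, the substitution $f_n=(-1)^n e_n$ converts the sign-flipped recursion into (for singleton $K$, literally) an averaging recursion, so that $f_n$ tends to a constant $C$ and $e_n\sim C(-1)^n$ with $C$ generically nonzero. This is not merely an obstruction to one proof technique; it is a counterexample to the conjecture. Concretely, take $m=3$ and $K=\{(\tfrac12,0,\tfrac12)\}$. Then $\eta=\tfrac12<1$, so~\eqref{eta} holds, while $\nu=0$, so~\eqref{nu} fails. There is no optimization, and $e_n=-\tfrac12 e_{n-1}-\tfrac12 e_{n-3}$ is linear with characteristic polynomial $2\lambda^3+\lambda^2+1=(\lambda+1)(2\lambda^2-\lambda+1)$; the root $-1$ is dominant and the complex pair has modulus $1/\sqrt{2}$. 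From the initial data $e_1=-\tfrac12$, $e_2=0$, $e_3=-\tfrac14$ one checks that $2f_n+f_{n-1}+f_{n-2}$ is conserved and equals $1$, whence $f_n\to\tfrac14$ and
\[
p_n-\tfrac12\ \longrightarrow\ \tfrac14\,(-1)^n,
\]
so $p_n$ does not converge at all. The conjecture is false as stated.

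Your closing instinct --- that some hypothesis weaker than~\eqref{nu} must be adjoined --- is therefore the correct conclusion, and the example shows that any such hypothesis must at least exclude $D$ consisting solely of odd integers. What you should \emph{not} do is keep searching for ``a finer invariant than $\gcd(D)$'' to rescue the conjecture in full generality: the parity example is a genuine failure of the conclusion, not of the method.
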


    \begin{remark}
In order to refine the set of all Nash equilibria in games in extensive form,
Selten introduced the concept of the trembling hand \cite{Selten}. This concept
takes into account the lack of perfect rationality and possibility of random
mistakes. If $\Gamma$ is a game in extensive form, construct a perturbed game by
assuming that in each information set of $\Gamma$ a player must mix all
available moves (including suboptimal ones) with some positive weight not less
than the predetermined value (which is a parameter of a particular information
set in a particular perturbed game). Thus, the set of all admissible mixed moves
in a particular information set is a compact subset of the simplex of all
lotteries over pure moves in this information set. This is similar to set $K$ in
the definition of BGLM. The difference is that in BGLM the set of admissible
mixed moves is the same in all information sets. Another major difference is
that in the trembling hand equilibrium concept, the key object is the limit of
the sequence of perturbed games as the severity of random mistakes tends to 0.
We keep this severity parameter ($1-\eta$ in our notation) fixed and vary the
number of objects in the pile, considering infinite horizon limit. Therefore, we
get different perspective on the role of slight mistakes.
     \end{remark}

    \section{Proof of the main result}
    \subsection{Existence of SPNE}
    We find SPNE by backward induction. Fix $m$ and $K$. Obviously, for
    $n=1$, any move leads to losing, as the player has to
    take at least one object in any case. Therefore, any move
    of the first player is in the set of all SPNE and $p_1=0$.

    For convenience reasons, let $p_s=1$ for any $s\le 0$.

    Now assume we proved the existence of SPNE for all BGLM with no more than
    $n=k-1$ objects. Consider BGLM with $n=k$ objects. Assume that after the
    move of the first player, $i$ objects are taken from the pile. The second
    player now plays BGLM with $n=k-i$ objects (becoming `first player' in this
    subgame) and wins it with probability $p_{k-i}$ by induction hypothesis. If
    the second player wins, the first player loses. Therefore, the probability
    that the first player wins in this case is $1-p_{k-i}$.
    By the law of total probility, for move $\boldpi=(\pi_1, \ldots, \pi_m) \in
    K$, the probability
    that the first player wins is given by:
    \begin{equation}\label{eq:wtpk}
        \widetilde{p}_k(\boldpi)=1-\sum_{i=1}^m \pi_i p_{k-i}.
    \end{equation}
    The player wants to maximize this probability by choosing optimal $\boldpi$. Function
    $\widetilde{p}_k$ is continuous with respect to
    $\boldpi$ and therefore attains its maximum value on
    compact set $K$. Then 
    \begin{equation}\label{eq:pk}
        p_k=\max_{\boldpi \in K}\widetilde{p}_k(\boldpi)
    \end{equation}
and $\argmax_{\boldpi} \widetilde{p}_k(\boldpi)$ is non-empty. Obviously, $p_k$
    does not depend on the choice of the move. After the move, the number of objects in the pile will be reduced, hence, the
    existence of SPNE now follows from the induction hypothesis.

    \subsection{Limit behaviour}
    In this section we prove~\eqref{eq:main}. 

    \subsubsection{The notation and the idea of the proof}
    First, we introduce some notation. Let
    \begin{gather*}
        \mathcal{D}_n:=p_n-\frac{1}{2},\quad \Delta_n:=|\mathcal{D}_n|,\\
        W_k=\{k, k-1, \ldots, k-m + 1\},\quad \overline{\Delta}_k=\max_{j \in
            W_k} \Delta_j.
    \end{gather*}
    It is easy to show that sequence $\{\overline{\Delta}_k\}$ is non-increasing
    (see~\autoref{lem:monot} and~\autoref{cor:monot}). Our goal is to show that
    it is strictly decreasing and has zero limit.

    Consider the state of a game with $k+1$ objects in the pile. Due to \eqref{eq:wtpk}-\eqref{eq:pk}, $\mathcal
    D_{k+1}$ is a convex combination of values~$\mathcal D_j$, $j\in W_k$, taken
    with a negative sign. If some of these values taken with nontrivial weights are less
    by absolute value than their maximum possible value $\overline{\Delta}_k$,
    their convex combination is also less than $\overline{\Delta}_k$ by absolute
    value and $\Delta_{k+1} < \overline{\Delta}_k$. Moreover, the gap can be
    estimated from below. This suggests a way to prove that
   sequence $\{\overline{\Delta}_k\}$ is strictly decreasing and tends to zero.

    However, it is also possible that the convex combination for $\mathcal
    D_{k+1}$ includes (with nontrivial weights) only those $\mathcal D_j$
    whose absolute values are (almost) equal to $\overline{\Delta}_k$. In this
    case, $\Delta_{k+1}\approx \overline{\Delta}_{k}$ and no significant drop
    occurs. Such cases should be considered separately.
    
    Due to condition~\eqref{nu}, the player is allowed to put nontrivial
    weight on any move $j$. Due to rationality, the player tends to put larger
    weights on moves with smaller $\mathcal D_j$. The `worst case'
    scenario
    is when all $\mathcal D_j$'s, $j\in W_k$, are positive and (almost) equal to
    $\overline{\Delta}_k$. We show that in this case $\mathcal D_{k-m}$ should
    be negative and significantly larger by absolute value than
    $\overline{\Delta}_k$, see details in \autoref{lem:k-m}.
    This gives us a drop between $\overline{\Delta}_{k-m}$ and $\Delta_{k+1}$.

    Another case that needs special attention is when there are several negative
    values of $\mathcal D_j \approx -\overline{\Delta}_j$, $j\in W_k$. This case
    is covered by \autoref{lem:drop}.  There we prove that significant drops in
    $\Delta_k$ occur at least for every additional $3m$ objects in the pile, and
    the sequence $\{\Delta_k\}$ can be estimated from above by a decreasing
    geometric progression and obtain the main result.

    \subsubsection{Preliminary considerations}
    \begin{lemma}[Monotonicity lemma]
        For every integer $k>1$, $\Delta_{k} \le \overline{\Delta}_{k-1}$.
    \label{lem:monot}    
    \end{lemma}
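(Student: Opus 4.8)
The goal is to show $\Delta_k \le \overline{\Delta}_{k-1}$ for every integer $k > 1$, where $\overline{\Delta}_{k-1} = \max_{j \in W_{k-1}} \Delta_j$ and $W_{k-1} = \{k-1, k-2, \ldots, k-m\}$. The plan is to start from the defining recursion. By~\eqref{eq:pk} there is some $\boldpi = (\pi_1, \ldots, \pi_m) \in K$ realizing the maximum, so $p_k = \widetilde p_k(\boldpi) = 1 - \sum_{i=1}^m \pi_i p_{k-i}$. Subtracting $\tfrac12$ and using $\sum_{i=1}^m \pi_i = 1$, I would write $\mathcal D_k = p_k - \tfrac12 = \tfrac12 - \sum_{i=1}^m \pi_i p_{k-i} = -\sum_{i=1}^m \pi_i (p_{k-i} - \tfrac12) = -\sum_{i=1}^m \pi_i \mathcal D_{k-i}$. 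So $\mathcal D_k$ is, up to sign, a convex combination of the $\mathcal D_{k-i}$ for $i = 1, \ldots, m$, i.e.\ of the $\mathcal D_j$ for $j \in W_{k-1}$.

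From here the bound is immediate by the triangle inequality: $\Delta_k = |\mathcal D_k| = \bigl|\sum_{i=1}^m \pi_i \mathcal D_{k-i}\bigr| \le \sum_{i=1}^m \pi_i |\mathcal D_{k-i}| = \sum_{i=1}^m \pi_i \Delta_{k-i} \le \sum_{i=1}^m \pi_i \overline{\Delta}_{k-1} = \overline{\Delta}_{k-1}$, where the last inequality uses $\Delta_{k-i} \le \overline{\Delta}_{k-1}$ for each $i$ (since $k - i \in W_{k-1}$) together with $\pi_i \ge 0$ and $\sum \pi_i = 1$.

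The one point that needs a little care is the indexing near the boundary, i.e.\ when $k \le m$: then some indices $k - i$ are $\le 0$, for which the convention $p_s = 1$ (hence $\mathcal D_s = \tfrac12$, $\Delta_s = \tfrac12$) from the existence-of-SPNE subsection applies. I would note that $W_{k-1}$ as defined still lists those indices, and the convention makes $\Delta_{k-i} \le \overline{\Delta}_{k-1}$ hold trivially there as well (each such term equals $\tfrac12$, and $\overline{\Delta}_{k-1}$ is the max over a set that includes them), so the same chain of inequalities goes through verbatim. I do not anticipate a genuine obstacle here — the lemma is essentially a one-line consequence of~\eqref{eq:wtpk}--\eqref{eq:pk} plus the triangle inequality — but being explicit about the $p_s = 1$ convention for $s \le 0$ is the detail worth stating cleanly.
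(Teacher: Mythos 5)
Your proof is correct and follows essentially the same route as the paper's: express $p_k$ via the optimal lottery using~\eqref{eq:wtpk}--\eqref{eq:pk}, write $\mathcal D_k$ as minus a convex combination of the $\mathcal D_{k-i}$, and apply the triangle inequality together with $\Delta_{k-i}\le\overline{\Delta}_{k-1}$. Your explicit remark about the convention $p_s=1$ for $s\le 0$ is a reasonable extra care the paper leaves implicit, but it changes nothing substantive.
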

    \begin{proof}
        It follows from~\eqref{eq:wtpk}-\eqref{eq:pk} that
        \[
            p_{k}=1-\sum_{i=1}^{m}\pi_i p_{k-i}.
        \]
        for some $\boldpi \in S$. We have:
        \begin{multline}
            \Delta_k=|\mathcal D_k|=\left|p_k-\frac{1}{2}\right|=
            \left|\frac{1}{2}-\sum_{i=1}^{m}\pi_i p_{k-i}\right|=
            \left|\sum_{i=1}^{m}\pi_i\left(\frac{1}{2}-p_{k-i}\right)\right|\le\\
            \sum_{i=1}^m
            \pi_i\left|\frac{1}{2}-p_{k-i}\right|=\sum_{i=1}^m \pi_i
            \Delta_{k-i} \le \sum_{i=1}^m \pi_i
            \overline{\Delta}_{k-1}=\overline{\Delta}_{k-1}.
        \end{multline}
    \end{proof}
    \begin{corollary}\label{cor:monot}
        For every integer $k>1$, $\overline{\Delta}_k \le
        \overline{\Delta}_{k-1}$.
    \end{corollary}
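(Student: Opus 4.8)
The plan is to reduce the claim to \autoref{lem:monot} by a direct comparison of the two sliding index windows $W_k$ and $W_{k-1}$. Write $W_k=\{k\}\cup\{k-1,\ldots,k-m+1\}$ and observe that $\{k-1,\ldots,k-m+1\}\subseteq\{k-1,\ldots,k-m\}=W_{k-1}$, so the only index in $W_k$ that is not already in $W_{k-1}$ is $k$ itself. Hence
\[
\overline{\Delta}_k=\max_{j\in W_k}\Delta_j=\max\Bigl\{\Delta_k,\ \max_{j\in W_k\setminus\{k\}}\Delta_j\Bigr\}.
\]

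First I would bound the second term: since $W_k\setminus\{k\}\subseteq W_{k-1}$, we have $\max_{j\in W_k\setminus\{k\}}\Delta_j\le\max_{j\in W_{k-1}}\Delta_j=\overline{\Delta}_{k-1}$. Next I would bound the first term using \autoref{lem:monot}, which gives $\Delta_k\le\overline{\Delta}_{k-1}$ for $k>1$. Combining the two bounds, the maximum of $\Delta_k$ and $\max_{j\in W_k\setminus\{k\}}\Delta_j$ is at most $\overline{\Delta}_{k-1}$, that is, $\overline{\Delta}_k\le\overline{\Delta}_{k-1}$.

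There is essentially no obstacle here: the only nontrivial ingredient is \autoref{lem:monot}, and everything else is bookkeeping about which indices lie in the two windows. The one thing to keep an eye on is the range of validity — the inequality $\Delta_k\le\overline{\Delta}_{k-1}$ requires $k>1$, which is precisely the hypothesis of the corollary, and for small $k$ the convention $p_s=1$ for $s\le 0$ (so that the corresponding $\Delta_s=\tfrac12$) keeps every quantity in sight well defined, so no separate base case is needed.
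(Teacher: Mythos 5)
Your proposal is correct and follows essentially the same route as the paper: replace $\Delta_k$ by $\overline{\Delta}_{k-1}$ via \autoref{lem:monot} and absorb the remaining indices of $W_k$, which all lie in $W_{k-1}$, into $\overline{\Delta}_{k-1}$. The only cosmetic difference is that you phrase the index bookkeeping as a set inclusion while the paper writes out the nested maxima explicitly.
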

    \begin{proof}
        Indeed,
        \begin{multline}
            \overline{\Delta}_k=\max\{\Delta_k, \Delta_{k-1}, \ldots,
                \Delta_{k-m+1}\} \le 
            \max\{\overline{\Delta}_{k-1}, \Delta_{k-1}, \ldots, \Delta_{k-m+1}\}= \\
            \max\{\max\{\Delta_{k-1},\ldots, \Delta_{k-m}\},\Delta_{k-1},\ldots,
                \Delta_{k-m+1}\}=\\
            \max\{\Delta_{k-1},\ldots, \Delta_{k-m}\}=\overline{\Delta}_{k-1}.
        \end{multline}
    \end{proof}
    \begin{lemma}[No long winning series]\label{lem:before-and-after}
        Assume that for some integer $k>m$ and for all $j\in W_k$, $p_j>
        \frac{1}{2}$. Then 
        \begin{equation}
            \label{eq:after}
        p_{k+1}<\frac{1}{2}
        \end{equation}
        and 
        \begin{equation}
            \label{eq:before}
            p_{k-m}\le \frac{1}{2}.
        \end{equation}
    \end{lemma}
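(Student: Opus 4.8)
The plan is to derive both inequalities directly from the recursion \eqref{eq:wtpk}--\eqref{eq:pk}, using the hypothesis that $p_j > \tfrac12$ for every $j \in W_k = \{k, k-1, \ldots, k-m+1\}$ together with the structural conditions \eqref{eta} and \eqref{nu}.

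For \eqref{eq:after}, I would write $p_{k+1} = \widetilde p_{k+1}(\boldpi^*) = 1 - \sum_{i=1}^m \pi^*_i\, p_{k+1-i}$ for an optimal $\boldpi^* \in K$. Since $k+1-i$ ranges over $W_k$ as $i$ ranges over $\{1,\ldots,m\}$, each $p_{k+1-i} > \tfrac12$. Because the weights $\pi^*_i$ are nonnegative and sum to $1$, the convex combination $\sum_i \pi^*_i p_{k+1-i}$ is strictly greater than $\tfrac12$ — strictly, because every term in the sum that carries positive weight exceeds $\tfrac12$, and at least one weight is positive. Hence $p_{k+1} = 1 - \sum_i \pi^*_i p_{k+1-i} < 1 - \tfrac12 = \tfrac12$, which is \eqref{eq:after}. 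This part is essentially immediate from the definitions.

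For \eqref{eq:before}, the idea is to argue by contradiction: suppose $p_{k-m} > \tfrac12$. Then I would like to say that the first player, moving from the position with $k+1$ objects, can guarantee a winning probability at most $\tfrac12$ — but that is not quite enough, since we want a statement about $p_{k-m}$, not $p_{k+1}$. Instead I would look at the position with $k+1$ objects and examine the optimal move there: for $\boldpi \in K$, $\widetilde p_{k+1}(\boldpi) = 1 - \sum_{i=1}^m \pi_i p_{k+1-i}$, and now \emph{all} of $p_k, p_{k-1}, \ldots, p_{k-m}$ are assumed $> \tfrac12$ (the first $m$ by hypothesis, the last by the contradiction assumption). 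The key leverage is condition \eqref{nu}: for the index $m$ there is a lottery in $K$ putting weight at least $\nu > 0$ on move $m$, hence on the term $p_{k+1-m} = p_{k-m}$. Actually the cleaner route is: apply the already-proved inequality \eqref{eq:after} one step earlier. If $p_{k-m} > \tfrac12$ as well, then $p_j > \tfrac12$ holds for all $j \in W_{k-1} = \{k-1, \ldots, k-m\}$, so by \eqref{eq:after} applied with $k-1$ in place of $k$ we get $p_k < \tfrac12$, contradicting the hypothesis $p_k > \tfrac12$. Therefore $p_{k-m} \le \tfrac12$, which is \eqref{eq:before}.

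The main obstacle I anticipate is purely bookkeeping: making sure the index windows $W_k$, $W_{k-1}$ line up correctly so that the hypothesis "$p_j > \tfrac12$ for all $j \in W_k$" combined with the contradiction hypothesis on $p_{k-m}$ exactly supplies the premise of \eqref{eq:after} at the shifted index, and checking that $k > m$ guarantees all referenced indices are positive so that the recursion \eqref{eq:pk} (rather than the boundary convention $p_s = 1$ for $s \le 0$) genuinely applies. Neither \eqref{eta} nor the full strength of \eqref{nu} seems essential for this particular lemma — only nonnegativity of the weights and the fact that they sum to $1$ — so I would not expect to invoke them here beyond what is already baked into $K \subset S_m$.
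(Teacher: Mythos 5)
Your proof is correct and follows essentially the same route as the paper: the direct convex-combination bound for \eqref{eq:after}, and for \eqref{eq:before} a contradiction argument that applies \eqref{eq:after} with $k$ replaced by $k-1$ (the edge case $k=m+1$ is harmless since then $p_{k-m}=p_1=0$). As you correctly anticipated, neither \eqref{eta} nor \eqref{nu} is needed here.
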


    \begin{proof}
        First, let us prove~\eqref{eq:after}. For some $\boldpi \in K$,
        $$
        p_{k+1}=1-\sum_{i=1}^m \pi_i p_{k-i+1} <1-\sum_{i=1}^m \pi_i
        \frac{1}{2}=1-\frac{1}{2}=\frac{1}{2}.
        $$
        Now prove~\eqref{eq:before} by contradiction. Assume
        $p_{k-m}>\frac{1}{2}$. Then one can apply~\eqref{eq:after} with $k$
        decreased by 1
        and prove that $p_{k}$ has to be less than $\frac{1}{2}$.
        Contradiction.
    \end{proof}

    \subsubsection{Worst case analysis}
    \begin{lemma}\label{lem:k-m}
        Assume that for some $\varkappa\in (0,1)$, for some integer $k>1$ and for all $j\in W_k$ the following inequality holds:
        \begin{equation}
        p_{j}\geqslant \frac{1}{2} + (1-\varkappa)\Delta_{k+1}.
        \label{assumption}
        \end{equation}        
                Then the following inequality holds:
        \begin{equation}
        \Delta_{k+1} \leqslant \frac{\eta}{(2-\eta)(1-\varkappa)}\Delta_{k-m}.
        \label{theta}
        \end{equation}
    \end{lemma}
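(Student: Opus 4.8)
The plan is to run the backward-induction recursion at the state with $k$ objects: there the optimal move already ``reaches'' the position $k-m$, while the other $m-1$ positions it can reach, namely $k-1,\dots,k-m+1$, all lie in $W_k$ and hence obey the hypothesis~\eqref{assumption}.

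First I would dispose of the trivial case: if $\Delta_{k+1}=0$ then~\eqref{theta} holds because $\Delta_{k-m}\ge 0$ and the constant $\eta/((2-\eta)(1-\varkappa))$ is positive (note $\eta\ge 1/m>0$ since every $\boldpi\in K$ is a probability vector, and $\eta<1$ by~\eqref{eta}). So assume $\Delta_{k+1}>0$. Then~\eqref{assumption} gives $p_j>\tfrac12$ for all $j\in W_k$; as $p_1=0$, this is impossible when $1\in W_k$, i.e.\ when $k\le m$, so in fact $k>m$. Now~\autoref{lem:before-and-after} applies (its hypothesis is precisely $p_j>\tfrac12$ for all $j\in W_k$) and yields $p_{k-m}\le\tfrac12$, so $\Delta_{k-m}=\tfrac12-p_{k-m}$.

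Next, I would fix an optimal lottery $\boldpi=(\pi_1,\dots,\pi_m)\in K$ at position $k$, so that by~\eqref{eq:wtpk}--\eqref{eq:pk} and~\eqref{assumption} at $j=k$,
\[
\sum_{i=1}^{m}\pi_i p_{k-i}=1-p_k\le \tfrac12-(1-\varkappa)\Delta_{k+1}.
\]
On the other hand $k-1,\dots,k-m+1\in W_k$, so~\eqref{assumption} gives $p_{k-i}\ge\tfrac12+(1-\varkappa)\Delta_{k+1}$ for $i=1,\dots,m-1$, and therefore
\[
\sum_{i=1}^{m}\pi_i p_{k-i}\ge (1-\pi_m)\bigl(\tfrac12+(1-\varkappa)\Delta_{k+1}\bigr)+\pi_m p_{k-m}.
\]
Putting the two bounds together, substituting $p_{k-m}=\tfrac12-\Delta_{k-m}$ and cancelling $\tfrac12$, one gets after rearrangement $2(1-\varkappa)\Delta_{k+1}\le \pi_m\bigl(\Delta_{k-m}+(1-\varkappa)\Delta_{k+1}\bigr)$. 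The bracket is nonnegative, so replacing $\pi_m$ by its upper bound $\eta$ (from~\eqref{eta}) and collecting the $\Delta_{k+1}$ terms yields $(2-\eta)(1-\varkappa)\Delta_{k+1}\le\eta\Delta_{k-m}$, which is~\eqref{theta}.

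The algebra is short and routine; the only points that need care are getting the two inequalities on $\sum_i\pi_i p_{k-i}$ pointing the right way --- the upper bound uses optimality of $\boldpi$ at $k$ together with the hypothesis at $j=k$, the lower bound uses the hypothesis at the $m-1$ intermediate positions --- and, crucially, knowing that $p_{k-m}\le\tfrac12$, which is exactly what~\autoref{lem:before-and-after} supplies once the reduction to $k>m$ is in place. No use of condition~\eqref{nu} is needed here.
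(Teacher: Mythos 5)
Your proof is correct and takes essentially the same route as the paper's: fix the optimal lottery at position $k$, use \autoref{lem:before-and-after} to get $p_{k-m}\le\tfrac12$, isolate the $\pi_m$-term and bound the remaining terms via hypothesis~\eqref{assumption}. The only differences are minor refinements of the same argument — you explicitly dispose of the degenerate case $\Delta_{k+1}=0$ and justify $k>m$ before invoking \autoref{lem:before-and-after}, and your rearrangement avoids dividing by $\pi_m$ (so the case $\pi_m=0$ is handled automatically), points the paper glosses over.
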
    
        
    \begin{proof}
        Consider strategy $\boldpi=(\pi_1, \ldots, \pi_m) \in K$ that allows the
        player facing $k$ objects to reach the winning probability of $p_k$. It
        follows from the definition (see~\eqref{eq:wtpk}) that
        \begin{equation}\label{eq:pk1}
            p_k = 1 - \sum_{i=1}^m p_{k-i} \pi_i.
        \end{equation}
        Note that due to~\autoref{lem:before-and-after}, $p_{k-m}\le
        \frac{1}{2}$ and therefore $p_{k-m}=\frac{1}{2}-\Delta_{k-m}$. Put it
        into~\eqref{eq:pk1}:
        \begin{multline}
        \label{pik}
        p_k  =  1-\left(\pi_m\left(\frac{1}{2}-\Delta_{k-m}\right) +
            \sum\limits_{i=1}^{m-1}p_{k-i}\pi_i\right)=\\
             1-\frac{\pi_m}{2}-\sum_{i=1}^{m-1}p_{k-i}\pi_i+\pi_m \Delta_{k-m}.
        \end{multline}
        Therefore, 
        \begin{equation}\label{eq:pmdkm}
            \pi_m\Delta_{k-m} = p_k - 1 +\frac{\pi_m}{2} +
            \sum\limits_{i=1}^{m-1}p_{k-i}\pi_i.
        \end{equation}
        Estimate $p_k$ and $p_{k-i}$ in~\eqref{eq:pmdkm} from below with
        $\frac12 + (1-\varkappa)\Delta_{k+1}$ using
        lemma assumption~\eqref{assumption}:
        \begin{equation}\label{eq:pmdkm-le}
            \pi_m\Delta_{k-m} \geqslant \frac{1}{2}+(1-\varkappa)\Delta_{k+1}- 1 +\frac{\pi_m}{2} + (1-\pi_m)\left(\frac{1}{2} + (1-\varkappa)\Delta_{k+1}\right).
        \end{equation}
        Here we also used the relation $\sum_{i=1}^{m-1} =1-\pi_m$. Simplifying the right-hand side of inequality~\eqref{eq:pmdkm-le}, we
        get:
        $$
        \pi_m\Delta_{k-m} \geqslant \Delta_{k+1} (1-\varkappa)(2-\pi_m),
        $$
        or
        \begin{equation}
        \Delta_{k-m}\geqslant (1-\varkappa)\frac{2-\pi_m}{\pi_m}\Delta_{k+1}\geqslant (1-\varkappa)\frac{2-\eta}{\eta}\Delta_{k+1}
        \label{seventeen}
        \end{equation}
        (from definition of $\eta$ and Theorem assumption (see \eqref{eta}), it follows that $\pi_m\leqslant\eta<1$). Then \eqref{theta} follows from \eqref{seventeen}. 
    \end{proof}
    \subsubsection{Drop down for losing positions} 
    In this part we show that for every \emph{losing position} (i.e. position
    with winning probability less than $1/2$), there is a `drop down' in the
    value of $\Delta_k$.
    \begin{lemma}\label{lem:drop-down}
        There exists $\delta<1$ such that the following holds: if $p_{k+1}< 1/2$ for
        some $k$, then
        \begin{equation}\label{eq:drop-down}
            \Delta_{k+1} \leqslant \delta \overline{\Delta}_{k-m}.
        \end{equation}
    \end{lemma}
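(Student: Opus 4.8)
The plan is to fix a parameter $\varkappa\in\left(0,\tfrac{2(1-\eta)}{2-\eta}\right)$, say $\varkappa:=\tfrac{1-\eta}{2-\eta}$, and to put
\[
    \delta:=\max\left\{\frac{\eta}{(2-\eta)(1-\varkappa)},\ \frac{1-\nu}{1-\nu(1-\varkappa)}\right\}.
\]
The first fraction is $<1$ because $\varkappa<\tfrac{2(1-\eta)}{2-\eta}$ forces $\eta<(2-\eta)(1-\varkappa)$ (using $\eta<1$ from~\eqref{eta}), and the second is $<1$ because $\nu>0$ from~\eqref{nu} and $\varkappa>0$; hence $\delta<1$. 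We may assume $\Delta_{k+1}>0$, since otherwise~\eqref{eq:drop-down} is trivial, and we restrict to $k>m$, so that~\autoref{lem:monot}, \autoref{cor:monot}, \autoref{lem:before-and-after} and~\autoref{lem:k-m} apply to all indices occurring below; the finitely many smaller values of $k$ do not affect the asymptotic claim~\eqref{eq:main}.

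The first step is a bookkeeping observation tying the monotonicity results together: for every $j\in W_k$ one has $\Delta_j\le\overline{\Delta}_{j-1}\le\overline{\Delta}_{k-m}$, the first inequality by~\autoref{lem:monot} and the second because $j-1\ge k-m$ and $\{\overline{\Delta}_\ell\}$ is non-increasing by~\autoref{cor:monot}. Since the indices $k+1-i$, $i=1,\dots,m$, appearing in~\eqref{eq:wtpk} for the position $k+1$ are exactly the elements of $W_k$, this yields $p_{k+1-i}\le\tfrac12+\overline{\Delta}_{k-m}$ for all $i$. It is this observation — rather than the weaker bound by $\overline{\Delta}_k$ — that produces the strictly-earlier window $\overline{\Delta}_{k-m}$ demanded by the statement, and I expect it to be the only genuinely non-obvious ingredient.

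Next I would split into two cases according to a threshold chosen precisely to match~\autoref{lem:k-m}. If $p_j\ge\tfrac12+(1-\varkappa)\Delta_{k+1}$ for all $j\in W_k$, then~\eqref{assumption} holds and~\autoref{lem:k-m} gives $\Delta_{k+1}\le\tfrac{\eta}{(2-\eta)(1-\varkappa)}\Delta_{k-m}\le\tfrac{\eta}{(2-\eta)(1-\varkappa)}\,\overline{\Delta}_{k-m}$ (using $k-m\in W_{k-m}$), which is~\eqref{eq:drop-down}. Otherwise there is $j_0\in W_k$ with $\mathcal{D}_{j_0}<(1-\varkappa)\Delta_{k+1}$; set $i_0:=k+1-j_0\in\{1,\dots,m\}$. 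By~\eqref{nu} pick $\boldpi'=(\pi'_1,\dots,\pi'_m)\in K$ with $\pi'_{i_0}\ge\nu$. Since $p_{k+1}$ is the maximum in~\eqref{eq:pk}, $p_{k+1}\ge 1-\sum_i\pi'_i p_{k+1-i}$, and substituting $p_{k+1}=\tfrac12-\Delta_{k+1}$ and $p_\ell=\tfrac12+\mathcal{D}_\ell$ turns this into $\Delta_{k+1}\le\sum_i\pi'_i\mathcal{D}_{k+1-i}$. Bounding the $i_0$-term by $\pi'_{i_0}(1-\varkappa)\Delta_{k+1}$ (the case hypothesis) and every other term by $\pi'_i\overline{\Delta}_{k-m}$ (the bookkeeping observation together with $\mathcal{D}_\ell\le\Delta_\ell$), one obtains $\Delta_{k+1}\le\pi'_{i_0}(1-\varkappa)\Delta_{k+1}+(1-\pi'_{i_0})\overline{\Delta}_{k-m}$, hence $\Delta_{k+1}\le\tfrac{1-\pi'_{i_0}}{1-\pi'_{i_0}(1-\varkappa)}\,\overline{\Delta}_{k-m}$ (the denominator is positive since $\pi'_{i_0}\le\eta<1$). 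As $x\mapsto\tfrac{1-x}{1-x(1-\varkappa)}$ is decreasing on $[0,1)$, substituting $\pi'_{i_0}\ge\nu$ again yields~\eqref{eq:drop-down}.

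Apart from the bookkeeping observation, the remaining work is routine algebra; the one point requiring care is the quantitative choice of $\varkappa$ — small enough that the first contraction factor stays below $1$, but otherwise free — and noting that condition~\eqref{nu}, i.e. $\nu>0$, is exactly what prevents the second factor from degenerating to $1$, in line with the paper's remark that~\eqref{nu} plays a technical but essential role.
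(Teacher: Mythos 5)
Your proof is correct, but it takes a genuinely different route from the paper's. The paper proves this lemma via the Corridor Lemma (\autoref{lem:corridor}, relegated to the Appendix as ``rather technical''): it splits on whether all $p_j$, $j\in W_k$, satisfy $p_j-\frac12\le(1+\tau)\Delta_{k+1}$ for an auxiliary parameter $\tau$, uses the corridor inequality to upgrade that two-sided closeness into the hypothesis~\eqref{assumption} of \autoref{lem:k-m} in Case~1, and handles a large positive outlier $p_i-\frac12>(1+\tau)\Delta_{k+1}$ by a direct monotonicity estimate in Case~2. You instead split directly on whether~\eqref{assumption} holds with a fixed $\varkappa$: if yes, \autoref{lem:k-m} finishes; if no, you exploit the failing index $j_0$ head-on, putting weight $\pi'_{i_0}\ge\nu$ on the move reaching $j_0$ and bounding the remaining terms by $\overline{\Delta}_{k-m}$ via \autoref{lem:monot} and \autoref{cor:monot}. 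This second branch is, in essence, the mechanism inside the paper's proof of \autoref{lem:corridor} (rationality plus condition~\eqref{nu} applied to a distinguished index), but deployed directly to produce the contraction factor $\frac{1-\nu}{1-\nu(1-\varkappa)}<1$ rather than routed through a separate corridor inequality; it also absorbs the paper's Case~2, so no third case and no parameter $\tau$ are needed. The net effect is a shorter argument that makes the Corridor Lemma and its Appendix proof unnecessary for the main theorem, at the cost of not isolating the corridor inequality as a standalone statement. Your housekeeping is also sound: the restriction to $k>m$ and $\Delta_{k+1}>0$ matches what the paper's own proof implicitly assumes (since \autoref{lem:k-m} rests on \autoref{lem:before-and-after}, which needs $k>m$ and strict inequalities), and your verification that both branches of $\delta$ are strictly below $1$ is correct.
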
  
    We need the following lemma for the proof.
    \begin{lemma}[Corridor lemma]\label{lem:corridor}
        Assume that $p_{k+1}<1/2$. 
        Then
        \begin{equation}
            \label{corridor}
            \max_{\substack{i\in W_k}}
            \left(p_{i}-\left(\frac{1}{2}+\Delta_{k+1}\right)\right)
            \ge \frac{\nu}{1-\nu}\max_{\substack{i\in
                    W_k}}\left(\frac{1}{2}+\Delta_{k+1}-p_{i}\right).
        \end{equation}
    \end{lemma}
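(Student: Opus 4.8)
The plan is to use the optimality of $p_{k+1}$: by \eqref{eq:pk} no lottery in $K$ can beat $p_{k+1}$, so I will exhibit a specific lottery that puts heavy weight on the move leading to the smallest position value in $W_k$ and read off the claimed inequality from the fact that this lottery does no better than the optimum. Throughout I write $P := \tfrac12 + \Delta_{k+1}$; since $p_{k+1} < \tfrac12$ we have $\Delta_{k+1} = \tfrac12 - p_{k+1}$, hence $1 - P = p_{k+1}$. I will also re-encode each move $\boldpi = (\pi_1,\dots,\pi_m)$ as a probability vector $(q_j)_{j\in W_k}$ via the substitution $j = k+1-i$, so that \eqref{eq:wtpk} reads $\widetilde{p}_{k+1} = 1 - \sum_{j\in W_k} q_j p_j$ with $q_{j} = \pi_{k+1-j}$.

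First I would record one elementary fact. Writing $A := \max_{j\in W_k}(p_j - P)$ for the left-hand side of \eqref{corridor} and $B := \max_{j\in W_k}(P - p_j)$ (so the right-hand side is $\tfrac{\nu}{1-\nu}B$), I claim $B \ge 0$: applying the encoding to an optimal move gives $\sum_{j\in W_k} q_j p_j = 1 - p_{k+1} = P$, so at least one $p_j$ with $j\in W_k$ is $\le P$, whence $B\ge 0$.

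Next, let $j^*\in W_k$ attain $p_{j^*} = \min_{j\in W_k} p_j = P - B$, and set $i^* := k+1-j^* \in \{1,\dots,m\}$. By \eqref{nu} and compactness of $K$ there is a lottery $\boldpi^*\in K$ with $\pi^*_{i^*} \ge \nu$; in the $q$-encoding it becomes a vector with $q^*_{j^*} = \pi^*_{i^*} \ge \nu$. Bounding $p_j \le \max_{j\in W_k} p_j = P + A$ for $j\neq j^*$,
\[
\widetilde{p}_{k+1}(\boldpi^*) = 1 - \sum_{j\in W_k} q^*_j p_j \ge 1 - q^*_{j^*}(P-B) - (1-q^*_{j^*})(P+A) = p_{k+1} + q^*_{j^*} B - (1-q^*_{j^*}) A .
\]
Since \eqref{eq:pk} gives $\widetilde{p}_{k+1}(\boldpi^*) \le p_{k+1}$, this yields $(1-q^*_{j^*})A \ge q^*_{j^*}B$. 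Because $q^*_{j^*} = \pi^*_{i^*} \le \eta < 1$ by \eqref{eta}, I may divide by $1 - q^*_{j^*}>0$ to get $A \ge \tfrac{q^*_{j^*}}{1-q^*_{j^*}}B$; then $B\ge 0$ together with monotonicity of $t\mapsto t/(1-t)$ on $[0,1)$ and $q^*_{j^*}\ge\nu$ gives $A \ge \tfrac{\nu}{1-\nu}B$, which is \eqref{corridor}.

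The proof is short, and the only things to be careful about are the index bookkeeping in the substitution $j=k+1-i$ (in particular, that the single coordinate on which \eqref{nu} guarantees mass $\ge\nu$ is exactly $q^*_{j^*}$) and the sign check $B\ge 0$ that licenses the monotonicity step; neither is a real obstacle. Conceptually the statement just says that a rational player facing $k+1$ objects, who by \eqref{nu} can always dump weight $\ge\nu$ onto whichever move she likes best, cannot be in a situation where every reachable position is far below $\tfrac12+\Delta_{k+1}$ unless some reachable position is comparably far above it.
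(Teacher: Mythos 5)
Your proof is correct and is essentially the paper's own argument in different clothing: both single out the move leading to the minimal $p_j$ in $W_k$, invoke \eqref{nu} to find a lottery putting weight at least $\nu$ on it, and read the inequality off from the optimality bound $\widetilde{p}_{k+1}(\boldpi^*)\le p_{k+1}$ (the paper writes this as $\sum_i\pi_i\bigl(p_{k-i+1}-\bigl(\frac12+\Delta_{k+1}\bigr)\bigr)\ge 0$ and splits off the $j$-th term, which is the same algebra). The only cosmetic difference is your sign check $B\ge 0$, done directly from the weighted average rather than by the paper's contradiction argument; both are fine.
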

    The proof of \autoref{lem:corridor} is rather technical and is relegated to Appendix.
    \begin{proof}[Proof of \autoref{lem:drop-down}]
        Fix arbitrary $\tau$ such that
        \begin{equation}\label{eq:tau-def}
        0 < \tau < \frac{\nu}{1-\nu}\frac{2-2\eta}{2-\eta}.
        \end{equation}
        Such $\tau$ exists since $\nu \in (0,1)$ and $\eta \in (0,1)$. We show that
        $$
            \delta:=\max\left\{\frac{\eta}{2-\eta}\frac{\nu}{\nu-\tau+\nu\tau},
                \frac{1}{1+\tau}\right\}
        $$
        satisfies~\eqref{eq:drop-down}. Due to~\eqref{eq:tau-def}, $0<\delta<1$.

        Consider separately two cases.

        \paragraph{Case 1.} For all $j\in W_k$
        \begin{equation}
        p_j-\frac{1}{2}\leqslant (1+\tau)\Delta_{k+1}.
        \end{equation}
        This inequality can be rewritten as
        \begin{equation}
        p_j-\left(\frac{1}{2}+\Delta_{k+1}\right)\leqslant \tau\Delta_{k+1}.
        \end{equation}
        Since the latter inequality is true for any $j\in W_k$, we obtain:
        \begin{equation}
        \max_{\substack{j\in W_k}} \left(p_{j}-\left(\frac{1}{2}+\Delta_{k+1}\right)\right)\leqslant \tau\Delta_{k+1}.
        \label{upper}
        \end{equation}
        According to Corridor lemma \ref{lem:corridor},
        \begin{equation}
            \max_{\substack{j\in W_k}} \left(p_{j}-\left(\frac{1}{2}+\Delta_{k+1}\right)\right)\geqslant \frac{\nu}{1-\nu} \max_{\substack{j \in W_k}} \left(\frac{1}{2}+\Delta_{k+1}-p_{j}\right).
        \label{lower}
        \end{equation}
        From \eqref{upper} and \eqref{lower} it follows that
        \begin{equation}
        \max_{\substack{j \in W_k}} \left(\frac{1}{2}+\Delta_{k+1}-p_{j}\right)\leqslant \frac{1-\nu}{\nu}\tau\Delta_{k+1}.
        \end{equation}
        Hence, for any $j\in W_k$ it is true that
        \begin{equation}
        \frac{1}{2}+\Delta_{k+1}-p_j\leqslant \frac{1-\nu}{\nu}\tau\Delta_{k+1},
        \end{equation}
        or
        \begin{equation}
        p_j\geqslant \frac{1}{2} + \left(1-\frac{1-\nu}{\nu}\tau\right)\Delta_{k+1}.
        \end{equation}
        Applying \autoref{lem:k-m} with $\varkappa=\frac{1-\nu}{\nu}\tau$, we obtain that
        \begin{equation}
        \Delta_{k+1}\leqslant \frac{\eta}{(2-\eta)\left(1-\frac{1-\nu}{\nu}\tau\right)}\Delta_{k-m},
        \end{equation}  
        or
        \begin{equation}
            \Delta_{k+1}\leqslant
            \frac{\eta}{2-\eta}\frac{\nu}{\nu-\tau+\nu\tau}\Delta_{k-m}\leqslant \delta
            \Delta_{k-m}\le \delta \overline{\Delta}_{k-m}.
        \end{equation}

        \paragraph{Case 2.} There exists $i\in W_k$ such that
        \begin{equation}
        p_i-\frac{1}{2}> (1+\tau)\Delta_{k+1}.
        \end{equation}
        Then, 
        \begin{equation}
        \Delta_{k+1}<\frac{1}{1+\tau}\left(p_i-\frac{1}{2}\right) \leqslant \delta
        \Delta_i\leqslant \delta \overline{\Delta}_i \leqslant \delta
        \overline{\Delta}_{k-m}.
        \end{equation}
        The last inequality is due to \autoref{cor:monot} and the fact that $i>k-m$.
    \end{proof}  
  
    \subsubsection{Drop down for any positions} 
    \begin{lemma}\label{lem:drop}
        For $\delta$ from~\autoref{lem:drop-down} and for all integer
        $k>2m$,
        \begin{equation}
            \Delta_{k+1}\leqslant \delta \overline{\Delta}_{k-{2m}}.
        \end{equation}
    \end{lemma}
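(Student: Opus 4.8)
The plan is to derive this from \autoref{lem:drop-down} together with the monotonicity of $\{\overline{\Delta}_k\}$ (\autoref{cor:monot}), via a short case analysis on the sign of $\mathcal D_{k+1}=p_{k+1}-\frac12$. Fix $k>2m$.

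If $p_{k+1}\le\frac12$ the argument is immediate. When $p_{k+1}=\frac12$ we have $\Delta_{k+1}=0$ and there is nothing to prove. When $p_{k+1}<\frac12$, I would apply \autoref{lem:drop-down} directly at index $k$ to get $\Delta_{k+1}\le\delta\overline{\Delta}_{k-m}$, and then use $\overline{\Delta}_{k-m}\le\overline{\Delta}_{k-2m}$ from \autoref{cor:monot} (valid since $k-m\ge k-2m$).

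The substantive case is $p_{k+1}>\frac12$, where \autoref{lem:drop-down} is not directly available at $k$ and one first has to locate a losing position inside $W_k$. Writing $p_{k+1}=1-\sum_{i=1}^m\pi_i p_{k+1-i}$ for an optimal $\boldpi\in K$ gives $\Delta_{k+1}=p_{k+1}-\frac12=\sum_{i=1}^m\pi_i\left(\frac12-p_{k+1-i}\right)$; since this quantity is positive, the set $B:=\{\,i:\ p_{k+1-i}<\frac12\,\}$ is non-empty. Discarding the non-positive terms and using $\frac12-p_{k+1-i}=\Delta_{k+1-i}$ for $i\in B$, I obtain $\Delta_{k+1}\le\sum_{i\in B}\pi_i\Delta_{k+1-i}\le\max_{i\in B}\Delta_{k+1-i}$. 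Choosing $i^\ast\in B$ attaining the maximum and setting $\ell:=k-i^\ast\in\{k-m,\ldots,k-1\}$, we have $p_{\ell+1}<\frac12$, so \autoref{lem:drop-down} applies at index $\ell$ and yields $\Delta_{\ell+1}\le\delta\overline{\Delta}_{\ell-m}$. Since $\ell-m\ge k-2m$, \autoref{cor:monot} gives $\overline{\Delta}_{\ell-m}\le\overline{\Delta}_{k-2m}$, whence $\Delta_{k+1}\le\Delta_{\ell+1}\le\delta\overline{\Delta}_{k-2m}$.

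I do not foresee a genuine obstacle here: the proof is essentially bookkeeping over which of the positions $k-m+1,\ldots,k+1$ is actually losing, plus one application of \autoref{lem:drop-down} at a shifted index. The only point requiring care is that every index at which the earlier lemmas are invoked must be large enough for those lemmas to be valid (\autoref{lem:drop-down} relies on \autoref{lem:k-m}, the Corridor lemma, and \autoref{lem:before-and-after}, which need the relevant index to exceed $m$); this is precisely why the statement is restricted to $k>2m$, since then $\ell\ge k-m>m$ and all indices occurring in the argument stay positive.
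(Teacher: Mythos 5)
Your proof is correct and follows essentially the same route as the paper: split on the sign of $p_{k+1}-\frac12$, and in the winning case reduce $\Delta_{k+1}$ to the deficit of a losing position inside $W_k$, then apply \autoref{lem:drop-down} at that shifted index together with \autoref{cor:monot}. Your inline computation bounding $\Delta_{k+1}$ by $\max_{i\in B}\Delta_{k+1-i}$ is exactly the content of the paper's \autoref{prop:dkpdkm} ($\Delta_{k+1}^+\le\overline{\Delta}^-_k$), so the two arguments differ only in presentation.
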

    To prove~\autoref{lem:drop} we have to introduce new notation and prove an
    auxiliary proposition. Let
    \newcommand{\Delm}{\Delta^-}
    \newcommand{\Delp}{\Delta^+}
    \newcommand{\bDelm}{\overline{\Delta}^-}
    \newcommand{\bDelp}{\overline{\Delta}^+}
    \begin{align*}
        \Delm_k  =\max\left\{0, \frac{1}{2}-p_k\right\},&\quad
        \Delp_k  =\max\left\{0, p_k-\frac{1}{2}\right\},\\
        \bDelm_k =\max_{i\in W_k} \Delm_i,&\quad
        \bDelp_k =\max_{i\in W_k} \Delp_i.
    \end{align*}
    Obviously, $\overline{\Delta}_k=\max\{\bDelm_k, \bDelp_k\}$.
    \begin{proposition}\label{prop:dkpdkm}
        For any natural $k$ the following holds:
        $$\Delta_{k+1}^+ \le \bDelm_k.$$
    \end{proposition}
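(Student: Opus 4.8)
The plan is to unwind the definition of $\Delta^+_{k+1}$ and bound it directly using the recursion~\eqref{eq:wtpk}--\eqref{eq:pk}. If $p_{k+1}\le\frac12$ then $\Delta^+_{k+1}=0$ and there is nothing to prove, so assume $p_{k+1}>\frac12$, i.e.\ $\Delta^+_{k+1}=p_{k+1}-\frac12>0$. Pick $\boldpi=(\pi_1,\ldots,\pi_m)\in K$ achieving the maximum in~\eqref{eq:pk} for $k+1$, so that $p_{k+1}=1-\sum_{i=1}^m\pi_i p_{k+1-i}$ and the indices $k+1-i$ for $i=1,\ldots,m$ range exactly over $W_k=\{k,k-1,\ldots,k-m+1\}$.

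Next I would write
\[
    \Delta^+_{k+1}=p_{k+1}-\frac12=\frac12-\sum_{i=1}^m\pi_i p_{k+1-i}
    =\sum_{i=1}^m\pi_i\left(\frac12-p_{k+1-i}\right)
    \le\sum_{i=1}^m\pi_i\,\Delta^-_{k+1-i},
\]
where the last inequality uses $\frac12-p_j\le\max\{0,\frac12-p_j\}=\Delta^-_j$ term by term. Since each index $k+1-i\in W_k$, we have $\Delta^-_{k+1-i}\le\bDelm_k$, and because $\sum_{i=1}^m\pi_i=1$ the convex combination is bounded by $\bDelm_k$, giving $\Delta^+_{k+1}\le\bDelm_k$ as claimed.

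There is essentially no obstacle here: the only subtlety is the bookkeeping that the summation index $i$ running over $\{1,\ldots,m\}$ makes $k+1-i$ sweep out precisely the window $W_k$, so that the bound $\Delta^-_{k+1-i}\le\bDelm_k$ is legitimate. The inequality $\frac12-p_j\le\Delta^-_j$ is immediate from the definition of $\Delta^-_j$ as a maximum with $0$, and convexity (the weights $\pi_i$ being nonnegative and summing to $1$) does the rest. This mirrors the computation already carried out in the proof of~\autoref{lem:monot}, only keeping track of signs so as to land on $\bDelm_k$ rather than $\overline{\Delta}_k$.
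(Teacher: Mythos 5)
Your proposal is correct and follows essentially the same route as the paper: reduce to the case $p_{k+1}>\frac12$, expand $p_{k+1}-\frac12$ as the convex combination $\sum_i\pi_i\bigl(\frac12-p_{k+1-i}\bigr)$ over the window $W_k$, and bound it by $\overline{\Delta}^-_k$. The only cosmetic difference is that the paper discards the terms with $p_{k+1-i}>\frac12$ before bounding, whereas you replace each term by $\Delta^-_{k+1-i}$ directly; these are the same estimate.
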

    \begin{proof}
        If $p_{k+1}\le 1/2$, then $\Delp_{k+1}=0\le \bDelm_{k}$ by definition of $\bDelm_k$. Consider case $p_{k+1}\ge 1/2$. Then for some $\boldpi \in K$,
        \begin{multline}
            p_{k+1}-\frac{1}{2}=\frac{1}{2}-\sum_{i=1}^m \pi_i p_{k-i+1}=
            \sum_{i=1}^m\pi_i \left(\frac{1}{2}- p_{k-i+1}\right) \\
            \le \sum_{\substack{i=1,\\ p_{k-i+1} \le 1/2}}^{m}
                    \pi_i\left(\frac{1}{2}- p_{k-i+1}\right)
            \le \sum_{\substack{i=1,\\ p_{k-i+1} \le 1/2}}^{m}
                    \pi_i \bar{\Delta}_k^- \\
            \le \sum_{i=1}^m \pi_i \bar{\Delta}_k^-=\bar{\Delta}_k^-.
        \end{multline}
    \end{proof}
    Now we can prove \autoref{lem:drop}.
    \begin{proof}[Proof of \autoref{lem:drop}]
        If $p_{k+1}<1/2$, \autoref{lem:drop-down} implies:
        $$
        \Delta_{k+1} \leqslant \delta \overline{\Delta}_{k-m} \leqslant
        \delta \overline{\Delta}_{k-2m}
        $$
        and the lemma is proved. (The last inequality is due
        to~\autoref{cor:monot}.)

        Now assume $p_{k+1}\geqslant 1/2$. In this case
        $\Delta_{k+1}=\Delta_{k+1}^+ \leqslant \bDelm_k$ due to
        \autoref{prop:dkpdkm}. For all $j \in W_k$ such that $p_j<1/2$,
        \autoref{lem:drop-down} implies:
        $$
            \Delta_j^-=\Delta_j\leqslant \delta
            \overline{\Delta}_{j-1-m}\leqslant
            \delta\overline{\Delta}_{k-2m}.
        $$
        Again, the last inequality is due to \autoref{cor:monot} since $j\geqslant
        k-m+1$. Therefore, $\bDelm_k\leqslant \delta \overline{\Delta}_{k-2m}$.
        This finishes the proof of \autoref{lem:drop}.
    \end{proof}
    \begin{corollary}\label{cor:final-drop}
        For all integer $k>3m$, $\overline{\Delta}_{k}\leqslant
        \delta \overline{\Delta}_{k-3m}$.
    \end{corollary}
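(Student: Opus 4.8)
The plan is to reduce the statement directly to \autoref{lem:drop} together with the monotonicity of $\{\overline{\Delta}_k\}$ established in \autoref{cor:monot}. Recall that $\overline{\Delta}_k=\max_{j\in W_k}\Delta_j$ with $W_k=\{k,k-1,\ldots,k-m+1\}$, so it is enough to bound each individual $\Delta_j$, $j\in W_k$, by $\delta\,\overline{\Delta}_{k-3m}$ and then take the maximum.

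Fix $j\in W_k$. Since $k>3m$ (hence $k\ge 3m+1$ as $k$ is an integer), we have $j\ge k-m+1\ge 2m+2$, so $j-1>2m$ and \autoref{lem:drop} applies with its ``$k$'' taken to be $j-1$. This gives $\Delta_j=\Delta_{(j-1)+1}\le \delta\,\overline{\Delta}_{j-1-2m}$. Next, from $j\ge k-m+1$ we get $j-1-2m\ge k-3m$, and since $\{\overline{\Delta}_k\}$ is non-increasing by \autoref{cor:monot}, it follows that $\overline{\Delta}_{j-1-2m}\le\overline{\Delta}_{k-3m}$. Combining the two inequalities, $\Delta_j\le\delta\,\overline{\Delta}_{k-3m}$ for every $j\in W_k$.

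Taking the maximum over $j\in W_k$ yields $\overline{\Delta}_k\le\delta\,\overline{\Delta}_{k-3m}$, which is the claim. There is no real obstacle here: the only thing requiring care is the index bookkeeping, namely verifying the hypothesis $j-1>2m$ of \autoref{lem:drop} and the comparison $j-1-2m\ge k-3m$, both of which are immediate consequences of $k>3m$ and $j\ge k-m+1$. (One can then iterate this corollary to bound $\overline{\Delta}_k$ by a decreasing geometric progression in $\lfloor k/(3m)\rfloor$, which gives $\overline{\Delta}_k\to 0$ and hence \eqref{eq:main}, but that step is carried out afterwards.)
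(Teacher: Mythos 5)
Your proof is correct and is essentially the same as the paper's: both apply \autoref{lem:drop} to each $\Delta_j$ with $j\in W_k$ (shifting the index to $j-1>2m$) and then use the monotonicity from \autoref{cor:monot} to replace $\overline{\Delta}_{j-1-2m}$ by $\overline{\Delta}_{k-3m}$ before taking the maximum. The index bookkeeping you carry out explicitly is exactly what the paper's one-line display compresses.
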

    \begin{proof}
    From definition of $\overline{\Delta}_{k}$, \autoref{lem:drop} and \autoref{cor:monot} it follows that
$$
\overline{\Delta}_{k} = \max(\Delta_k,\ldots , {\Delta}_{k-m+1})\leqslant \delta\max(\overline{\Delta}_{k-2m-1},\ldots , \overline{\Delta}_{k-3m})= \delta \overline{\Delta}_{k-3m}.
$$        
    \end{proof}

    Now we are ready to finish the proof of the main result. Let $k_N=1+3mN$ for
    arbitrary integer $N$. Inductive application of \autoref{cor:final-drop}
    implies:
    $$
        \overline{\Delta}_{k_N}\leqslant
        \delta^{N}\overline{\Delta}_1=\frac{1}{2}\delta^N\to 0\text{ as }N\to \infty.
    $$
    Due to monotonicity of $\overline{\Delta}_k$, this implies:
    $$
        \lim_{k\to \infty} \overline{\Delta}_k \to 0.
    $$
    By definition of $\overline{\Delta}_k$, $\Delta_k \leqslant
    \overline{\Delta}_k$ and therefore: 
    $$
        \lim_{k\to \infty} {\Delta}_k \to 0
    $$
    which is equivalent to~\eqref{eq:main}. \autoref{thm:main} is proved modulo~\autoref{lem:corridor}.
    


\bigskip

\textit{This research did not receive any specific grant from funding agencies in the public, commercial, or not-for-profit sectors. Declarations of interest: none.}

\section*{Appendix}

    In this Appendix, we prove \autoref{lem:corridor}.
    
    \begin{proof}
        Take any $\boldpi=(\pi_1, \ldots, \pi_m) \in K$. Since the players are
        rational~\eqref{eq:pk}, 
        $$
            p_{k+1}\geqslant 1 - \sum_{i=1}^{m}\pi_ip_{k-i+1},
        $$
        or equivalently,
        $$
        \sum_{i=1}^{m}\pi_ip_{k-i+1} \geqslant 1 - p_{k+1}.
        $$
        Due to Lemma assumption, $p_{k+1}<\frac12$ and therefore
        $p_{k+1}=\frac12-\Delta_{k+1}$. We have:
        $$
            \sum_{i=1}^{m}\pi_i p_{k-i+1} \geqslant  
            1 - \left(\frac{1}{2} - \Delta_{k+1}\right) = 
            \frac{1}{2} + \Delta_{k+1}.
        $$
        Then, the following inequality holds:
        \begin{multline}
            \label{zero}
            \sum_{i=1}^{m}\pi_i\left(p_{k-i+1}-
                \left(\frac{1}{2}+\Delta_{k+1}\right)\right)=\\ 
            \sum_{i=1}^{m}\pi_i p_{k-i+1}-
            \sum_{i=1}^{m}\pi_i\left(\frac{1}{2}+\Delta_{k+1}\right)\geqslant\\
          \left(\frac{1}{2}+\Delta_{k+1}\right)-\left(\frac{1}{2}+\Delta_{k+1}\right)=0.
        \end{multline}
        Now take arbitrary
        \begin{equation}      
           j\in \argmax_{1\leqslant i\leqslant m}\left(\frac{1}{2}+\Delta_{k+1}-p_{k-i+1}\right).
           \label{jargmax}         
        \end{equation} 
        By definition of $\nu$ and Theorem assumption $\nu>0$ (see \eqref{nu}), there exists a strategy $\widehat\boldpi=(\widehat\pi_1, \ldots, \widehat\pi_m) \in K$ such that
                \begin{equation}      
                   \widehat\pi_j\geqslant \nu > 0.
                   \label{pij}         
                \end{equation}
        Inequality \eqref{zero} holds for arbitrary $\boldpi$ and therefore it
        holds for
        $\widehat\boldpi$. Rewrite it in the following way, separating the term with
        $i=j$ from the rest of the sum:
        $$
        \sum_{\substack{1\leqslant i\leqslant m \\ i\neq j}}\widehat\pi_i\left(p_{k-i+1}-\left(\frac{1}{2}+\Delta_{k+1}\right)\right)+ \widehat\pi_j\left(p_{k-j+1}-\left(\frac{1}{2}+\Delta_{k+1}\right)\right)\geqslant 0.
        $$
        Then we have the following sequence of estimates:
        \begin{multline}
                   -\widehat\pi_j\left(p_{k-j+1}-\left(\frac{1}{2}+\Delta_{k+1}\right)\right)\leqslant\sum_{\substack{1\leqslant i\leqslant m \\ i\neq j}}\widehat\pi_i\left(p_{k-i+1}-\left(\frac{1}{2}+\Delta_{k+1}\right)\right) \leqslant        
                   \\
                   \sum_{\substack{1\leqslant i\leqslant m \\ i\neq j}}\widehat\pi_i\max_{\substack{1\leqslant t\leqslant m}} \left(p_{k-t+1}-\left(\frac{1}{2}+\Delta_{k+1}\right)\right) =
                   \\
                  \left(\sum_{\substack{1\leqslant i\leqslant m \\ i\neq j}}\widehat\pi_i\right)\cdot \max_{\substack{1\leqslant t\leqslant m }} \left(p_{k-t+1}-\left(\frac{1}{2}+\Delta_{k+1}\right)\right)=
                  \\
                     (1-\widehat\pi_j)  \max_{\substack{1\leqslant t\leqslant m}} \left(p_{k-t+1}-\left(\frac{1}{2}+\Delta_{k+1}\right)\right),
        \label{pimax}                 
        \end{multline}
        where the last equality follows from the fact that 
        $$\sum_{i=1}^m\widehat\pi_i=1.$$
        From \eqref{pimax} we derive the lower estimate for the left-hand side of the Corridor lemma inequality \eqref{corridor}:
        \begin{multline}
        \max_{\substack{1\leqslant t\leqslant m}} \left(p_{k-t+1}-\left(\frac{1}{2}+\Delta_{k+1}\right)\right)\geqslant -\frac{\widehat\pi_j}{1-\widehat\pi_j} \left(p_{k-j+1}-\left(\frac{1}{2}+\Delta_{k+1}\right)\right)=
        \\
        \frac{\widehat\pi_j}{1-\widehat\pi_j} \left(\frac{1}{2}+\Delta_{k+1}-p_{k-j+1}\right).
        \label{almost}
        \end{multline} 
        Note that 
        $$
        \frac{1}{2}+\Delta_{k+1}-p_{k-j+1}\geqslant 0.
        $$
        Indeed, otherwise, from the definition of $j$ (see \eqref{jargmax}) it
        would follow that for all $i=1,\ldots, m$,
        $$
        \frac{1}{2}+\Delta_{k+1}-p_{k-i+1}< 0
        $$
        or
        $$
        p_{k-i+1}> \frac{1}{2}+\Delta_{k+1}.
        $$ 
        However, this is impossible because for optimal strategy
        $(\pi_1,\ldots,\pi_m)$ we have:
        $$p_{k+1}=1-\sum_{i=1}^m p_{k-i+1} \pi_i <1 -\sum_{i=1}^m \pi_i \left(\frac{1}{2}+\Delta_{k+1}\right) = \frac{1}{2}-\Delta_{k+1}$$ 
        whereas $p_{k+1}=\frac{1}{2}-\Delta_{k+1}$ by definition.

        Note that function $x\mapsto \frac{x}{1-x}$ is increasing for $x \in (0,1)$. Thus
        we can estimate $\frac{\widehat \pi_j}{1-\widehat \pi_j}$ by
        $\frac{\nu}{1-\nu}$ from below in~\eqref{almost} and obtain 
        \begin{multline}
        \max_{\substack{1\leqslant t\leqslant m}} \left(p_{k-t+1}-\left(\frac{1}{2}+\Delta_{k+1}\right)\right)\geqslant \frac{\nu}{1-\nu} \left(\left(\frac{1}{2}+\Delta_{k+1}\right)-p_{k-j+1} \right)=
        \\
        \frac{\nu}{1-\nu}\max_{\substack{1\leqslant t\leqslant m}}\left(\frac{1}{2}+\Delta_{k+1}-p_{k-t+1}\right).
        \end{multline}
        The last equality follows from the definition of $j$ (see \eqref{jargmax}). This finishes the proof
        of \autoref{lem:corridor} and the main result (\autoref{thm:main}).
    \end{proof}


\begin{thebibliography}{}

\bibitem[{1}]{Bachet}
Bachet, C. G., \and Labosne, A. (1874). Problèmes plaisants et délectables qui se font par les nombres. Gauthier-Villars.

\bibitem[{2}]{Boros2018}
Boros, E., Gurvich, V., Ho, N. B., Makino, K., and Mursic, P. (2018). On the Sprague–Grundy function of Exact k-Nim. \textit{Discrete Applied Mathematics}, 239, 1-14.

\bibitem[{3}]{Boros2019}
Boros, E., Gurvich, V., Ho, N. B., Makino, K., and Mursic, P. (2019). Sprague–Grundy function of symmetric hypergraphs. \textit{Journal of Combinatorial Theory, Series A}, 165, 176-186.

\bibitem[{4}]{Bouton}
Bouton, C. L. (1901-1902). Nim, a game with a complete mathematical theory. \textit{Annals of Mathematics}, 3(1/4), 35-39.

\bibitem[{5}]{Gray}
Gray, D. and Locke, S. C. (2018). A variant of Nim. \textit{Discrete Mathematics}, 341(9), 2485-2489.

\bibitem[{6}]{Li}
Li, S. Y. (1978). $N$-person Nim and $N$-person Moore's Games. \textit{International Journal of Game Theory}, 7(1), 31-36.

\bibitem[{7}]{Moore}
Moore, E. H. (1910). A generalization of the game called nim. \textit{Annals of Mathematics}, 11(3), 93-94.

\bibitem[{8}]{Selten}
Selten, R. (1975). Reexamination of the perfectness concept for equilibrium points in extensive games. \textit{International Journal of Game Theory}, 4(1), 25-55.

\end{thebibliography}
\end{document}